\renewcommand{\today}{\the\day/\the\month/\the\year}
\DeclareMathAlphabet\EuR{U}{eur}{m}{n}
\SetMathAlphabet\EuR{bold}{U}{eur}{b}{n}
\newcommand{\curs}{\EuR}
\newcommand{\catdef}[2][]{\expandafter\newcommand\csname#2\endcsname%
{#1\curs{#2}}}
\let\Gamma=\varGamma
\let\Omega=\varOmega
\let\Sigma=\varSigma
\renewenvironment{enumerate}[1][]
{\begin{enumerat}[#1]\setlength{\itemsep}{6pt}}{\end{enumerat}}
\newenvironment{enuma}{\begin{enumerate}[{\rm(a) }]}{\end{enumerate}}
\definecolor{darkgreen}{rgb}{0,0.5,0}
\definecolor{bluegreen}{rgb}{0,0.2,0.8}
\definecolor{darkred}{rgb}{0.8,0,0}
\definecolor{newercolor}{rgb}{0.2,0,1}
\definecolor{darkyellow}{rgb}{0.7,0.7,0}
\definecolor{darkorange}{rgb}{0.8,0.4,0}
\newcommand{\8}[1]{c_{#1}}
\numberwithin{table}{section}
\newlength{\short}
\newcommand{\4}[1]{\widebar{#1}}
\newcommand{\5}[1]{\widehat{#1}}
\def\pair[#1,#2]{[\hskip-1.5pt[#1,#2]\hskip-1.5pt]}
\let\oldcirc=\circ
\renewcommand{\circ}{\mathchoice
    {\mathbin{\scriptstyle\oldcirc}}{\mathbin{\scriptstyle\oldcirc}}
    {\mathbin{\scriptscriptstyle\oldcirc}}
    {\mathbin{\scriptscriptstyle\oldcirc}}}
\def\beq#1\eeq{\begin{equation*}#1\end{equation*}}
\def\beqq#1\eeqq{\begin{equation}#1\end{equation}}
\numberwithin{equation}{section}
\newtheorem{Thm}{Theorem}[section]
\newtheorem{Prop}[Thm]{Proposition}
\newtheorem{Lem}[Thm]{Lemma}
\theoremstyle{definition}
\newtheorem{Defi}[Thm]{Definition}
\newcommand{\widebar}[1]
      {\overset{{\mskip1mu\leaders\hrule height0.4pt\hfill\mskip1mu}}{#1}
      \vphantom{#1}}
\newcounter{let} \setcounter{let}{0}
\loop\stepcounter{let}
\edef\csname cal\alph{let}\endcsname%
\loop\stepcounter{let}
\edef\csname scr\alph{let}\endcsname%
\newcommand{\tdef}[2][]{\expandafter\newcommand\csname#2\endcsname%
{#1\textup{#2}}}
\newcommand{\fdef}[1]{\expandafter\newcommand\csname#1\endcsname%
{\mathfrak{#1}}}
\newcommand{\bbdef}[1]{\expandafter\newcommand%
\csname#1\endcsname{\mathbb{#1}}}
\newcommand{\itdef}[1]{\expandafter\newcommand\csname#1\endcsname%
{\textit{#1}}}
\newcommand{\gee}{\varepsilon}
\newcommand{\sminus}{\smallsetminus}
\newcommand{\lie}[3]{\def\test{#2}\def\tst{G}\ifx\test\tst{{}^{#1}#2_{#3}}
\else{{}^{#1}\!#2_{#3}}\fi}
\renewcommand{\*}{\,\lower6pt\hbox{\Large{\textup{*}}}\,}
\newcommand{\syl}[3][]{\textup{Syl}^{#1}_{#2}(#3)}
\newcommand{\sylp}[2][]{\syl[#1]{p}{#2}}
\renewcommand{\Im}{\textup{Im}}
\newcommand{\autf}{\Aut_{\calf}}
\newcommand{\homf}{\Hom_{\calf}}
\newcommand{\mxfoura}[8]{\left(\begin{smallmatrix}#1&#2&#3&#4\\#5&#6&#7&#8}
\newcommand{\mxfourb}[8]{\\#1&#2&#3&#4\\#5&#6&#7&#8\end{smallmatrix}\right)}
\renewcommand{\:}{\colon}
\newcommand{\nsg}{\trianglelefteq}
\let\too=\longrightarrow
\let\xto=\xrightarrow
\newcommand{\gen}[1]{{\langle}#1{\rangle}}
\newcommand{\longleft}[1]{\;{\leftarrow%
\count255=0 \loop \mathrel{\mkern-6mu}%
    \relbar\advance\count255 by1\ifnum\count255<#1\repeat}\;}
\newcommand{\longright}[1]{\;{\count255=0 \loop \relbar\mathrel{\mkern-6mu}%
    \advance\count255 by1\ifnum\count255<#1\repeat\rightarrow}\;}
\newcommand{\Right}[2]{\overset{#2}{\longright#1}}
\newcommand{\RIGHT}[3]{\mathrel{\mathop{\kern0pt\longright#1}
        \limits^{#2}_{#3}}}
\newcommand{\LEFT}[3]{\mathrel{\mathop{\kern0pt\longleft#1}\limits^{#2}_{#3}}
}
\newcommand{\dRIGHT}[3]{\mathrel{%
   \mathop{\vcenter{\baselineskip=0pt\hbox{$\kern0pt\longright#1$}%
   \hbox{$\kern0pt\longright#1$}}}\limits^{#2}_{#3}}}
\newcommand{\LRIGHT}[3]{\mathrel{%
   \mathop{\vcenter{\baselineskip=0pt\hbox{$\kern0pt\longleft#1$}%
   \hbox{$\kern0pt\longright#1$}}}\limits^{#2}_{#3}}}
\newcommand{\RLEFT}[3]{\mathrel{%
   \mathop{\vcenter{\baselineskip=0pt\hbox{$\kern0pt\longright#1$}%
   \hbox{$\kern0pt\longleft#1$}}}\limits^{#2}_{#3}}}
\newcommand{\onto}[1]{\;{\count255=0 \loop \relbar\mathrel{\mkern-6mu}%
    \advance\count255 by1
    \ifnum\count255<#1 \repeat \twoheadrightarrow}\;}
\newcommand{\longline}{\bigskip\hfill\hbox to 8cm{\hrulefill}%
\hfill\bigskip}
\def\LFS(#1){\textup{LFS($#1$)}} 
\def\LF(#1){\textup{LF($#1$)}} 
\newcommand{\higherlim}[2]{\displaystyle\setbox1=\hbox{\rm lim}
	\setbox2=\hbox to \wd1{\leftarrowfill} \ht2=0pt \dp2=-1pt
	\setbox3=\hbox{$\scriptstyle{#1}$}
	\ifdim\wd1<\wd3
	\mathop{\hphantom{^{#2}}\vtop{\baselineskip=5pt\box1\box2}^{#2}}_{#1}
	\else
	\mathop{\vtop{\baselineskip=5pt\box1\box2}^{#2}}_{#1}
	\fi}
\begin{document}

\title{Correction to: An algebraic model for finite loop spaces}

\author{Carles Broto}
\address{Departament de Matem\`atiques, Universitat Aut\`onoma de Barcelona,
Edifici C, 08193 Bellaterra, Spain and Centre de Recerca Matem\`atica,
Edifici C, Campus Bellaterra, 08193 Bellaterra, Spain}

\email{carles.broto@uab.cat}
\thanks{C. Broto is partially supported by the Spanish State Research Agency
through the Severo Ochoa and Mar\'ia de Maeztu Program for Centers and Units
of Excellence in R\&D (CEX2020-001084-M), by AEI grant PID2020-116481GB-100,
and by AGAUR grant 2021-SGR-01015.}

\author{Ran Levi}
\address{Institute of Mathematics, University of Aberdeen,
Fraser Noble 138, Aberdeen AB24 3UE, U.K.}
\email{r.levi@abdn.ac.uk}
\thanks{}

\author{Bob Oliver}
\address{Universit\'e Sorbonne Paris Nord, LAGA, UMR 7539 du CNRS, 
99, Av. J.-B. Cl\'ement, 93430 Villetaneuse, France.}
\email{bobol@math.univ-paris13.fr}
\thanks{B. Oliver is partially supported by UMR 7539 of the CNRS}



\subjclass[2020]{Primary 55R35. Secondary 20D20, 20E22} 
\keywords{Finite loop spaces, classifying spaces, $p$-local compact groups, 
fusion}


\begin{abstract}
We correct here two errors in our earlier paper "An algebraic model for 
finite loop spaces". 
\end{abstract}


\bigskip

\maketitle

In this paper, we correct two erroneous arguments found in our earlier paper
\cite{BLO6}.

In the proof of Lemma A.8 in \cite{BLO6}, we applied \cite[Proposition
5.4]{BLO3} to orbit categories of transporter systems, while that
proposition is stated only for orbit categories of fusion systems. After
replacing that by Proposition \ref{p:lim*=Lambda*} below, Lemma A.8 can be
proven as stated in \cite{BLO6}.

In the proof of Corollary A.10 in \cite{BLO6}, we applied \cite[Proposition
A.9(b)]{BLO6} in a way that is not valid unless all objects in $\call$ are
$\calf$-centric. That corollary is a special case of Proposition
\ref{p:dropmore} here.

\section{Inclusions of linking systems for the same fusion system} 
\label{s:L0<L}

\newcommand{\lincl}{\iota}

We refer to \cite[Definition 1.9]{BLO6} for the complete definition of a 
linking system associated to a fusion system $\calf$ over a discrete 
$p$-toral group $S$. Very briefly, it consists of a category $\call$ whose 
objects are subgroups of $S$, together with a pair of functors 
	\[ \calt_{\Ob(\call)}(S) \Right4{\delta} \call \Right4{\pi} \calf 
	\]
satisfying certain conditions. Here, $\calt_{\Ob(\call)}(S)$ is the 
\emph{transporter category}: the category with the same objects as $\call$, 
and where $\Mor_{\calt_{\Ob(\call)}(S)}(P,Q)$ is the set of all $g\in S$ 
such that $\9gP\le Q$. Also, the set $\Ob(\call)$ is required to be 
closed under $\calf$-conjugacy and overgroups, and must include all 
subgroups of $S$ that are $\calf$-centric and $\calf$-radical. 

As usual, when $P\le Q$ are objects in a linking system $\call$, we write 
$\lincl_{P,Q}=\delta_{P,Q}(1)\in\Mor_\call(P,Q)$ for the inclusion morphism. 
By ``extensions'' and ``restrictions'' of morphisms we mean extensions and 
restrictions of source and target both, with respect to these inclusions. 


\begin{Prop} \label{p:L-prop}
The following hold for each linking system $\call$ associated to a saturated 
fusion system $\calf$ over a discrete $p$-toral group $S$.
\begin{enuma} 

\item For each $P,Q\in\Ob(\call)$, the homomorphism 
$\pi_{P,Q}\:\Mor_\call(P,Q)\too\homf(P,Q)$ is surjective. The group 
$\Ker(\pi_P)$ acts freely on $\Mor_\call(P,Q)$, and $\pi_{P,Q}$ induces a 
bijection 
	\[ \Mor_\call(P,Q)/\Ker(\pi_P) \Right5{\cong} \homf(P,Q) \,. \]
If $P$ is fully centralized in $\calf$, then 
$\Ker(\pi_P)=\delta_P(C_S(P))$. 

\item For every morphism $\psi\in\Mor_\call(P,Q)$, and every 
$P_*,Q_*\in\Ob(\call)$ such that $P_*\le{}P$, $Q_*\le{}Q$, and 
$\pi(\psi)(P_*)\le{}Q_*$, there is a unique morphism 
$\psi|_{P_*,Q_*}\in\Mor_\call(P_*,Q_*)$ (the ``restriction'' of $\psi$) 
such that $\psi\circ\iota_{P_*,P}=\iota_{Q_*,Q}\circ\psi|_{P_*,Q_*}$.  

\item Let $P\nsg \widebar{P}\le S$ and 
$Q\le\widebar{Q}\le S$ be objects in $\call$.  Let 
$\psi\in\Mor_\call(P,Q)$ be such that for each $g\in\widebar{P}$, 
there is $h\in\widebar{Q}$ satisfying 
$\iota_{Q,\widebar{Q}}\circ\psi\circ\delta_P(g)
=\delta_{Q,\widebar{Q}}(h)\circ\psi$.   Then there is a unique morphism 
$\widebar{\psi}\in\Mor_\call(\widebar{P},\widebar{Q})$ such that 
$\widebar{\psi}|_{P,Q}=\psi$. 

\item All morphisms in $\call$ are monomorphisms and 
epimorphisms in the categorical sense.

\item A morphism $\psi$ in $\call$ is an isomorphism in $\call$ if 
$\pi(\psi)$ is an isomorphism in $\calf$. 

\end{enuma}
\end{Prop}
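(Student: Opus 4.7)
The plan is to derive parts (a)--(d) directly from the defining axioms of a linking system as laid out in \cite[Definition 1.9]{BLO6}; most of the statements are essentially reformulations of those axioms, supplemented by short deductions that exploit the interplay between $\delta$ and $\pi$.

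For part (a), the surjectivity of $\pi_{P,Q}$ and the free action of $\Ker(\pi_P)$ on $\Mor_\call(P,Q)$ by pre-composition are both among the linking system axioms; the induced bijection $\Mor_\call(P,Q)/\Ker(\pi_P) \cong \homf(P,Q)$ is then formal. The identity $\Ker(\pi_P) = \delta_P(C_S(P))$ in the fully centralized case is likewise built into the definition, since this is how centralizer information is encoded in $\call$.

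For part (b), the existence of the restriction $\psi|_{P_*,Q_*}$ is part of the restriction axiom. Uniqueness reduces to (a): if $\psi_1,\psi_2\in\Mor_\call(P_*,Q_*)$ both satisfy $\iota_{Q_*,Q}\circ\psi_i=\psi\circ\iota_{P_*,P}$, then $\pi(\psi_1)=\pi(\psi_2)$, so by (a) they differ by pre-composition with some $\gamma\in\Ker(\pi_{P_*})$; the resulting equation $\iota_{Q_*,Q}\circ\psi_2\circ\gamma=\iota_{Q_*,Q}\circ\psi_2$ together with the free $\Ker(\pi_{P_*})$-action on $\Mor_\call(P_*,Q)$ forces $\gamma=\Id$. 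Part (d) follows from (a) and (b): for the monomorphism property, if $\psi\alpha=\psi\beta$ with $\alpha,\beta\in\Mor_\call(R,P)$, applying $\pi$ and using that $\pi(\psi)$ is an injective group homomorphism gives $\pi(\alpha)=\pi(\beta)$, so $\alpha$ and $\beta$ differ by pre-composition with an element of $\Ker(\pi_R)$ that fixes $\psi\beta$, hence is the identity. The epimorphism half is dual but more delicate and typically uses (c) to extend morphisms to overgroups where the analogous cancellation becomes available.

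The main obstacle is part (c). The plan is to invoke the normalizer extension axiom of the linking system: the hypothesis in (c) is precisely the extendability condition required, asserting that conjugation by $\widebar P$ on $P$ is intertwined via $\psi$ with conjugation by $\widebar Q$ on $Q$. This produces $\widebar\psi\in\Mor_\call(\widebar P,\widebar Q)$; uniqueness follows by restricting $\widebar\psi$ to $(P,Q)$ and applying (b). The delicate point is matching the hypothesis to the precise form of the extension axiom, after which the conclusion is immediate.
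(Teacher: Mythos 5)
The paper does not prove Proposition~\ref{p:L-prop}; it is cited verbatim from \cite[Proposition A.4]{BLO6}, so there is nothing in-paper to compare against. Assessed on its own terms, your outline handles part (a), the uniqueness half of (b), and the monomorphism half of (d) correctly.

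However, your treatment of the existence claim in (b) is not correct. You write that the existence of $\psi|_{P_*,Q_*}$ ``is part of the restriction axiom,'' but neither the linking system axioms of \cite[Definition 1.9]{BLO6} nor the transporter system axioms of \cite[Definition A.1]{BLO6} include a restriction axiom: the defining conditions record the structure of $\delta$ and $\pi$, the free action of $\Ker(\pi_P)$ with $\pi_{P,Q}$ the orbit map, axioms (B), (C), and the \emph{extension} axiom (II), but not restriction. Existence of restrictions must be deduced, e.g.\ by surjectivity of $\pi_{P_*,Q_*}$ choose $\chi\in\Mor_\call(P_*,Q_*)$ with $\pi(\chi)=\pi(\psi)|_{P_*,Q_*}$; then $\iota_{Q_*,Q}\circ\chi$ and $\psi\circ\iota_{P_*,P}$ lie in the same $\pi_{P_*,Q}$-fibre, hence differ by precomposition with some $\gamma\in\Ker(\pi_{P_*})$ (here the orbit-map property is essential), and $\psi|_{P_*,Q_*}:=\chi\circ\gamma$ works. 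Similarly, the epimorphism half of (d) is left essentially unproved and is not ``dual'' to the monomorphism half: from $\alpha\circ\psi=\beta\circ\psi$ you only get that $\pi(\alpha)$ and $\pi(\beta)$ agree after restriction to $\Im(\pi(\psi))$, which is a proper subgroup of the source of $\alpha$ in general, so there is no cancellation in $\calf$; passing from there to $\alpha=\beta$ genuinely requires the restriction and extension machinery of (b) and (c), and gesturing at ``typically uses (c)'' is not an argument. Finally, your identification of (c) with an extension axiom is defensible, but only after recording that \cite{BLO6} defines a linking system as a transporter system satisfying extra conditions, so that axiom (II) is literally available; without that remark the step is unjustified as stated.
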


\begin{proof} Points (a)--(d) are shown in \cite[Proposition 
A.4]{BLO6}, while (e) follows from Propositions A.2(c) and A.5 in 
\cite{BLO6}. 
\end{proof}

We want to show that the geometric realizations of two linking systems 
associated to the same fusion system are homotopy equivalent. The next 
lemma is a first step towards doing that.

\begin{Lem} \label{l:|L0|simeq|L|}
Let $\calf$ be a saturated fusion system over a discrete $p$-toral group 
$S$. Let $\call_0\subseteq\call$ be linking systems associated to $\calf$ 
such that $\Ob(\call)\sminus\Ob(\call_0)=\calp$, where $\calp$ is an 
$\calf$-conjugacy class of subgroups of $S$. Then the inclusion of nerves 
$|\call_0|\subseteq|\call|$ is a homotopy equivalence.
\end{Lem}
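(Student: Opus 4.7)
The plan is to apply Quillen's Theorem~A to the inclusion functor $i\:\call_0\hookrightarrow\call$: it suffices to verify, for each $Q\in\Ob(\call)$, that the comma category $(i\downarrow Q)$ has contractible nerve. When $Q\in\Ob(\call_0)$, the pair $(Q,\Id_Q)$ is a terminal object of $(i\downarrow Q)$, so the nerve is contractible.

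The interesting case is $Q\in\calp$. Let $(P_0,\phi)$ be any object of $(i\downarrow Q)$, so $P_0\in\Ob(\call_0)$ and $\phi\in\Mor_\call(P_0,Q)$. Set $R=\pi(\phi)(P_0)$. Since $P_0\in\Ob(\call_0)$ cannot be $\calf$-conjugate to $Q\in\calp$ and $\pi(\phi)$ is injective, $R<Q$ strictly; and since $\Ob(\call_0)$ is closed under $\calf$-conjugacy, $R\in\Ob(\call_0)$. By Proposition~\ref{p:L-prop}(b), the restriction $\widetilde\phi\in\Iso_\call(P_0,R)$ exists and satisfies $\phi=\iota_R^Q\circ\widetilde\phi$, giving an isomorphism $(P_0,\phi)\cong(R,\iota_R^Q)$ in $(i\downarrow Q)$. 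Using the monomorphism property of Proposition~\ref{p:L-prop}(d), a morphism $(R,\iota_R^Q)\to(R',\iota_{R'}^Q)$ is forced to equal $\iota_R^{R'}$ and hence exists precisely when $R\le R'$. The inclusion of the full subcategory on the objects $(R,\iota_R^Q)$ is therefore an equivalence of categories, identifying $(i\downarrow Q)$ up to equivalence with the poset
\[
\cals_Q\defeq\set{R\in\Ob(\call_0)}{R<Q}
\]
ordered by inclusion; thus $|(i\downarrow Q)|\simeq|\cals_Q|$.

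It therefore remains to show that $\cals_Q$ is contractible, which is the main technical obstacle. Because $\Ob(\call_0)$ must contain every $\calf$-centric $\calf$-radical subgroup of $S$, the hypothesis $Q\in\Ob(\call)\sminus\Ob(\call_0)$ forces $Q$ to fail one of these two conditions. The strategy is to exploit this failure---together with the fact that $\cals_Q$ is closed upward among proper subgroups of $Q$ (since $\Ob(\call_0)$ is closed under overgroups in $S$) and contains every proper $\calf$-centric $\calf$-radical subgroup of $Q$---to exhibit a characteristic proper subgroup $R_0<Q$ which lies in $\Ob(\call_0)$ and is contained in every element of $\cals_Q$, so that $\cals_Q$ is a cone on $R_0$. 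When $Q$ is not $\calf$-radical, the expected candidate is built from $O_p(\autf(Q))>\Inn(Q)$ via its fixed points on $Q$; when $Q$ is not $\calf$-centric, one uses the non-trivial quotient $C_S(Q)/Z(Q)$ analogously. Producing such a characteristic subgroup, verifying that it lies in $\Ob(\call_0)$, and checking that every element of $\cals_Q$ dominates it is where the real work lies; once this is done, contractibility of $\cals_Q$---and hence of $(i\downarrow Q)$---follows, and Quillen's Theorem~A gives the desired homotopy equivalence $|\call_0|\simeq|\call|$.
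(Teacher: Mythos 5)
Your plan to apply Quillen's Theorem~A is the right general strategy, but you are using the wrong form of it, and this is fatal. You take the overcategory $(i\downarrow Q)$ consisting of pairs $(P_0,\phi)$ with $P_0\in\Ob(\call_0)$ and $\phi\in\Mor_\call(P_0,Q)$. For $Q\in\calp$ this category is \emph{always empty}: if such a $\phi$ existed, then $R:=\pi(\phi)(P_0)\le Q$ is $\calf$-conjugate to $P_0$, so $R\in\Ob(\call_0)$ by closure under $\calf$-conjugacy; but $\Ob(\call_0)$ is also closed under overgroups in $S$, so $R\le Q$ would force $Q\in\Ob(\call_0)$, contradicting $Q\in\calp$. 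Your own reduction makes this visible: the poset $\cals_Q=\set{R\in\Ob(\call_0)}{R<Q}$ is always empty for exactly this reason, so there can be no subgroup $R_0\in\cals_Q$ serving as a cone point. The empty category is not contractible, so this direction of Quillen's Theorem~A simply cannot be used to prove the statement.

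The repair is to reverse the direction and use the undercategory $Q\downarrow\cali$, whose objects are pairs $(R,\psi)$ with $R\in\Ob(\call_0)$ and $\psi\in\Mor_\call(Q,R)$; this is always nonempty (it contains $(S,\iota_Q^S)$), which reflects the fact that morphisms in fusion and linking systems naturally go \emph{up} in size, not down. That is the approach taken in the paper: one reduces $Q\downarrow\cali$ (for $Q\in\calp$, chosen fully normalized) first to the corresponding undercategory in the normalizer linking system $N_\call(Q)$ via a retraction built from the morphisms $\Phi_R\:N_S(R)\to N_S(Q)$, and then to the undercategory of $\5Q\downarrow\cali_N$, where $\5Q=\set{x\in N_S(Q)}{c_x\in O_p(\autf(Q))}$ is a proper \emph{overgroup} of $Q$ lying in $\Ob(\call_0)$ (its existence uses precisely the failure of $Q$ to be $\calf$-centric and $\calf$-radical). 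That last category has $(\5Q,\Id)$ as an initial object, hence is contractible. Your instinct to exploit the failure of centricity or radicality is correct, but it must produce an \emph{overgroup} of $Q$ in $\Ob(\call_0)$, not a subgroup.
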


\newcommand{\rr}{\textbf{\textit{r}}}
\newcommand{\ii}{\textbf{\textit{j}}}
\renewcommand{\8}{{}^\bullet}

\begin{proof} The following proof is essentially that given in 
\cite[Proposition 3.11]{BCGLO1}, modified for fusion systems over discrete 
$p$-toral groups. To simplify notation, for 
$\varphi\in\Mor_{\call}(Q,R)$, we write 
$\Im(\varphi)=\Im(\pi(\varphi))\le{}R$, and 
$\varphi(Q_0)=\pi(\varphi)(Q_0)\le{}R$ if $Q_0\le{}Q$.

We must show that the inclusion functor $\cali\:\call_0\rightarrow \call$ 
induces a homotopy equivalence $|\call_0|\simeq |\call|$. By Quillen's 
Theorem A (see \cite{Quillen}), it will be enough to prove that the 
undercategory $P{\downarrow}\cali$ is contractible (i.e., $|P{\downarrow} 
\cali|\simeq *$) for each $P$ in $\call$. This is clear when 
$P\notin\calp$ (since $P{\downarrow}\cali$ has initial object $(P,\Id)$ in 
that case), so it suffices to consider the case $P\in\calp$. Since all 
subgroups in $\calp$ are isomorphic in the category $\call$, we can 
assume that $P$ is fully normalized.

Set 
	\[ \widehat{P}= \{x\in N_S(P) \,|\, c_x\in O_p(\autf(P)) \}. \] 
Recall that $P\notin\Ob(\call_0)$, and hence by definition of a linking 
system cannot be both $\calf$-centric and $\calf$-radical. If $P$ is not 
$\calf$-centric, then $\5P\ge P{\cdot}C_S(P)>P$. If $P$ is not 
$\calf$-radical, then $\Inn(P)<O_p(\autf(P))\le\Aut_S(P)$, the last 
inclusion since $P$ is fully normalized, and so $\5P>P$. Thus 
$\5P\in\Ob(\call_0)$ (and $P\nsg\5P$) in either case. 


Set $N_{\call_0}(P)=\call_0\cap N_{\call}(P)$, and let 
	\[ \cali_N\: N_{\call_0}(P) \Right5{} N_{\call}(P) \]
be the inclusion (thus the restriction of $\cali$). Consider the functors 
	\[ \5P{\downarrow}\cali_N \Right4{\ii_2} P{\downarrow}\cali_N 
	\Right4{\ii_1} P{\downarrow}\cali\,, \]
where $\ii_1$ is the inclusion of undercategories, induced by the 
inclusions $N_{\call_0}(P)\to \call_0$ and $N_\call(P)\to\call$, and where 
$\ii_2$ sends an object $(Q,\alpha)$ to $(Q,\alpha\circ\iota_{P,\5P})$ 
(for $\alpha\in\Mor_{N_{\call}(P)}(\5P,Q)$). 
For each $i=1,2$, we will construct a retraction $\rr_i$ such that 
$\rr_i\circ\ii_i=\Id$, together with a natural transformation 
of functors between $\ii_i\circ\rr_i$ and the identity. It then follows 
that $|P{\downarrow}\cali|\simeq |P{\downarrow}\cali_N|\simeq 
|\5P{\downarrow}\cali_N|$, and the last space is contractible since 
$(\5P,\Id_{\5P})$ is an initial object in $\5P{\downarrow}\cali_N$ (recall 
$\5P\in\Ob(N_{\call_0}(P))$). 
 
\smallskip

\noindent\textbf{Step 1: } We first construct the 
retraction functor $\rr_1\:P{\downarrow}\cali\too P{\downarrow}\cali_N$, 
together with a natural transformation $\bigl(\ii_1\circ 
\rr_1\xto{~\eta~}\Id_{P{\downarrow}\cali}\bigr)$.  By \cite[Lemma 
1.7(c)]{BLO6}, for each $R\in\calp$ (the $\calf$-conjugacy class of $P$), 
there is a morphism in $\calf$ from $N_S(R)$ to $N_S(P)$ which sends $R$ 
isomorphically to $P$. Since $\pi$ is surjective on morphism sets by 
Proposition \ref{p:L-prop}(a), we can choose a morphism 
	\[ \Phi_{R}\in\Mor_{\call}(N_S(R),N_S(P)), \]
for each $R\in\calp$ such that $\Phi_{R}(R)=P$. 
We also require that $\Phi_P=\Id_{N_S(P)}$.

For each $(Q,\varphi)$ in $P{\downarrow}\cali$, set 
$N_{(Q,\varphi)}(P)=\Phi_{\varphi(P)}(N_Q(\varphi(P)))$. Thus 
$P<N_{(Q,\varphi)}(P)\le N_S(P)$, since $N_Q(\varphi(P))>\varphi(P)$ by 
\cite[Lemma 1.8]{BLO3}. Consider the following diagram:
	\beqq \vcenter{\xymatrix@C=50pt@R=30pt{ 
	P \ar[r]^-{\varphi_*} \ar[dr]_-{\varphi'} & N_Q(\varphi(P)) 
	\ar[r]^-{\iota} \ar[d]^(0.4){(\Phi_{\varphi(P)})_*}_(0.4){\cong} & Q \\
	& N_{(Q,\varphi)}(P) \ar[ur]_-{\varphi''}
	}} \label{e:LO-0} \eeqq
where $\iota$ is the inclusion (in $\call$), where $\varphi_*$ and 
$(\Phi_{\varphi(P)})_*$ denote the restriction morphisms of Proposition 
\ref{p:L-prop}(b) (restricting source and/or target as indicated), and where 
$\varphi'=(\Phi_{\varphi(P)})_*\circ\varphi_*$ and 
$\varphi''=\iota\circ(\Phi_{\varphi(P)})_*^{-1}$ (so the triangles 
commute). Define $\rr_1:P{\downarrow}\cali \too P{\downarrow}\cali_N$ on 
objects by setting
	\[ \rr_1\bigl(Q,\varphi\bigl) = 
	\bigl(N_{(Q,\varphi)}(P), \varphi'\bigr). \]

We still need to define $\rr_1$ on morphisms. 
For each morphism $\beta\in\Mor_{P{\downarrow}\cali} 
((Q,\varphi),(R,\psi))$; i.e., for each commutative square of the form
	\beqq \vcenter{\xymatrix@C=40pt@R=25pt{ 
	P \ar[r]^{\varphi} \ar[d]_{\Id} & Q \ar[d]_{\beta} \\
	P \ar[r]^{\psi} & R \rlap{\,,}
	}} \label{e:L0-1} \eeqq
we get the following commutative diagrams:
	\beqq  \vcenter{\xymatrix@C=40pt@R=30pt{ 
	P \ar[r]^-{\varphi_*} \ar[d]_{\Id} & N_Q(\varphi(P)) \ar[r]^-{\iota} 
	\ar[d]_{\beta_*} & Q \ar[d]_{\beta} \\ 
	P \ar[r]^-{\psi_*} & N_R(\psi(P)) \ar[r]^-{\iota} & R 
	}} \qquad\qquad
	\vcenter{\xymatrix@C=40pt@R=30pt{ 
	N_Q(\varphi(P)) \ar[r]^{(\Phi_{\varphi(P)})_*}_{\cong} \ar[d]_{\beta_*} 
	& N_{(Q,\varphi)}(P) \ar[d]_{\5\beta} \\
	N_R(\psi(P)) \ar[r]^{(\Phi_{\psi(P)})_*}_{\cong} & N_{(R,\psi)}(P) 
	}} \label{e:LO-2} \eeqq
Again, $\beta_*$, $\varphi_*$, and $(\Phi_{\varphi(P)})_*$ denote the 
morphisms after restricting source and target as shown, while 
$\5\beta=(\Phi_{\psi(P)})_*\circ\beta_*\circ(\Phi_{\varphi(P)})_*^{-1}$.

Since each of the three squares in \eqref{e:LO-2} commutes, it follows that 
$\psi'=\5\beta\circ\varphi'$ and $\psi''\circ\5\beta=\varphi''$, where 
$\varphi'$, $\psi'$, $\varphi''$, and $\psi''$ are as in \eqref{e:LO-0}. So 
we can define $\rr_1$ on morphisms by setting 
	\[ \rr_1(\beta) = \5\beta\colon (N_{(Q,\varphi)}(P),\varphi') 
	\Right3{} (N_{(R,\psi)}(P),\psi'). \] 

Define a natural transformation $\eta\colon 
\ii_1\circ\rr_1\to\Id_{P{\downarrow}\cali}$ by sending an object $(Q,\varphi)$ 
to the morphism 
	\[\varphi''\: (\ii_1\circ\rr_1)(Q,\varphi) = 
	(N_{(Q,\varphi)}(P),\varphi') \Right3{} (Q,\varphi) \] 
(a natural transformation since $\psi''\circ\5\beta=\varphi''$). 
Since $\rr_1\circ\ii_1=\Id_{P{\downarrow}\cali_N}$, this finishes the proof 
that $|P{\downarrow} \cali|\simeq |P{\downarrow} \cali_N|$. 

\smallskip

\noindent\textbf{Step 2: } 
Recall that $\ii_2:\widehat{P}{\downarrow} \cali_N \rightarrow 
P{\downarrow}\cali_N$ is induced by precomposing with 
the inclusion $\lincl_{P,\widehat{P}}\in\Mor_{\call}(P,\widehat{P})$. 
We now construct a retraction functor 
$\rr_2\:P{\downarrow}\cali_N\too\5P{\downarrow}\cali_N$, together with a 
natural transformation of functors from $\Id_{P{\downarrow}\cali_N}$ to 
$\ii_2\circ\rr_2$. 

Since $P$ is fully normalized in $\calf$ by assumption, it is also fully 
centralized by the Sylow axiom for a saturated fusion system 
\cite[Definition 1.4(I)]{BLO6}. So by Proposition \ref{p:L-prop}(a), the 
structure homomorphism $\pi_P\:\Aut_{\call}(P)\too\autf(P)$ is 
surjective and $\Ker(\pi_P)=\delta_P(C_S(P))$. Since 
$\pi_P\circ\delta_P\:N_S(P)\too\autf(P)$ sends $g$ to $c_g$, 
we have $\5P=\delta_P^{-1}(\pi_P^{-1}(O_p(\autf(P))))$, and so 
$\delta_P(\5P)=\pi_P^{-1}(O_p(\autf(P)))$. Since $\pi_P$ is surjective, 
this proves that $\delta_P(\5P)\nsg\Aut_{\call}(P)$.

Fix subgroups $Q,R\le N_S(P)$ containing $P$, and let 
$\varphi\in\Mor_{N_{\call}(P)}(Q,R)$ be a morphism. For $g\in Q\5P$, 
write $g=g'x$ for $g'\in Q$ and $x\in\5P$, and let $y\in\5P$ be such that 
$\delta_P(y)=(\varphi|_P)\delta_P(x)(\varphi|_P)^{-1}$. Then 
$\varphi\delta_Q(g')=\delta_R(\varphi(g'))\varphi$ by condition (C) in the 
definition of a linking system (\cite[Definition 1.9]{BLO6}), so after 
restriction of source and target to $P$, we get 
$(\varphi|_P)\delta_P(g')(\varphi|_P)^{-1}=\delta_P(\varphi(g'))$. 
Set $h=\varphi(g')y$. Then 
	\[ (\varphi|_P)\delta_P(g)(\varphi|_P)^{-1} = 
	(\varphi|_P)\delta_P(g')(\varphi|_P)^{-1}\circ
	(\varphi|_P)\delta_P(x)(\varphi|_P)^{-1} = 
	\delta_P(\varphi(g')y) = \delta_P(h) \]
in $\Aut_{\call}(P)$, and hence 
	\begin{align*} 
	\iota_{R,R\5P} \circ \varphi \circ \delta_Q(g) \circ \iota_{P,Q} 
	&= \iota_{P,R\5P}\circ(\varphi|_P)\circ\delta_P(g) \\
	&= \iota_{P,R\5P}\circ \delta_P(h)\circ(\varphi|_P) = 
	\delta_{R,R\5P}(h) \circ \varphi \circ \iota_{P,Q} \in 
	\Mor_{N_{\call_0}(P)}(P,R\5P). 
	\end{align*}
So $\iota_{R,R\5P}\circ\varphi\circ\delta_Q(g)=\delta_{R,R\5P}(h)\circ 
\varphi$ since $\iota_{P,Q}$ is an epimorphism by Proposition 
\ref{p:L-prop}(d). Hence by Proposition \ref{p:L-prop}(c), there is a 
unique morphism $\5\varphi\in\Mor_{\call_0}(Q\5P,R\5P)$ (the ``extension'' 
of $\varphi$) such that the following diagram commutes in $\call$: 
	\beq \vcenter{\xymatrix@C=40pt@R=25pt{ 
	Q \ar[r]^{\varphi} \ar[d]^{\lincl_{Q,Q\5P}}
	& R \ar[d]_{\lincl_{R,R\5P}} \\
	Q\5P \ar[r]^{\5\varphi} & R\5P 
	}} \eeq
Note that $Q\5P,R\5P\in\Ob(\call_0)$ (and hence $\5\varphi$ lies in 
$N_{\call_0}(P)$) since $\5P>P$.

The functor $\rr_2\colon P\downarrow \cali_N\to 
\widehat{P}{\downarrow}\cali_N$ is defined on objects by setting
	\[ \rr_2\bigl(Q, \alpha\bigr)=
	\bigl(Q\widehat{P}, \widehat{\alpha}\bigr). \] 
If $\beta\colon (Q,\alpha)\too (R, \gamma)$ is a morphism in 
$P{\downarrow}\cali_N$, that is
$\beta\in\Mor_{\call}(Q,R)$ is such that 
$\beta\circ\alpha=\gamma$, then we define $\rr_2(\beta)=\widehat{\beta}$. 
Because of the uniqueness of the extension $\widehat{\beta}$, this 
construction defines a functor from $P{\downarrow}\cali_N$ to 
$\5P{\downarrow}\cali_N$. Moreover, $\rr_2\circ 
\ii_2=\Id_{\widehat{P}{\downarrow} \cali_N}$, and $\ii_2\circ \rr_2\simeq 
\Id_{P{\downarrow}\cali_N}$, where the homotopy is induced by the natural 
transformation given by the inclusions $\lincl_{Q,Q\widehat{P}}$.
\end{proof}

Lemma \ref{l:|L0|simeq|L|} says that under certain conditions, we can 
remove one class of objects from a linking system without changing the 
homotopy type of its nerve. We need to combine this with the ``bullet 
construction'', defined in \cite{BLO3}, to show that we can remove 
infinitely many classes of objects without changing the homotopy type. 

\begin{Defi}[{\cite[Definition 3.1]{BLO3}}] \label{d:bullet}
Let $\calf$ be a fusion system over 
a discrete $p$-toral group $S$, and let $T\nsg S$ be its identity 
component. Let $m\ge0$ be such that $S/T$ has exponent $p^m$; thus 
$g^{p^m}\in T$ for all $g\in S$. Set $W=\autf(T)$.
\begin{enuma} 

\item For each $A\le T$, set $I(A)=C_T(C_W(A))$, and let $I(A)_0$ be its 
identity component. 

\item For each $P\le S$, set $P^{[m]}=\gen{g^{p^m}\,|\,g\in P} \le T$, and 
set $P\8=P\cdot I(P^{[m]})_0$. 

\item Let $\calf\8\subseteq\calf$ be the full subcategory with 
$\Ob(\calf\8)=\{P\8\,|\,P\le S\}$. 

\end{enuma}
\end{Defi}

Some of the key properties of the bullet construction are listed in the 
following proposition:

\begin{Prop}[{\cite{BLO3}}] \label{p:bullet}
Let $\calf$ be a saturated fusion system over a discrete $p$-toral group 
$S$. 
\begin{enuma} 

\item The set $\Ob(\calf\8)=\{P\8\,|\,P\le S\}$ contains only finitely many 
$S$-conjugacy classes of subgroups.

\item For each $P\le S$, we have $(P\8)\8=P\8$. 

\item For each $P\le Q\le S$, we have $P\8\le Q\8$. 


\item For each $P,Q\le S$ and each $\varphi\in\homf(P,Q)$, there is a 
unique homomorphism $\varphi\8\in\homf(P\8,Q\8)$ such that 
$\varphi\8|_P=\varphi$. 

\end{enuma}
\end{Prop}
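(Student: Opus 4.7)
This proposition compiles four standard properties of the bullet construction from \cite{BLO3}. My plan is to indicate the key ideas behind each part, since the statements themselves are quoted.

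For (a), the finiteness ultimately comes from $W=\autf(T)$ being a finite group (a consequence of the saturation axioms, since $T$ is a discrete $p$-torus). Since $W$ has only finitely many subgroups, the centralizers $C_W(A)$ take finitely many values as $A$ ranges over subgroups of $T$, and hence so do the subgroups $I(A)=C_T(C_W(A))$ and their identity components $I(A)_0$. Combined with the finiteness of $S/T$, this bounds the number of $S$-conjugacy classes of subgroups of $S$ of the form $P\cdot I(P^{[m]})_0$.

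For (c), I would verify monotonicity directly: if $P\le Q$, then $P^{[m]}\le Q^{[m]}$, which reverses under centralizers to give $C_W(P^{[m]})\ge C_W(Q^{[m]})$, hence $I(P^{[m]})\le I(Q^{[m]})$ and likewise for identity components, yielding $P\8\le Q\8$.

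For (b), one direction $P\8\le (P\8)\8$ is immediate from (c) applied to $P\le P\8$. For the reverse, the claim reduces to $I((P\8)^{[m]})_0=I(P^{[m]})_0$. A typical element of $P\8$ has the form $gh$ with $g\in P$ and $h\in I(P^{[m]})_0\le T$, and since $T$ is abelian and normal in $S$, one computes that $(gh)^{p^m}$ equals $g^{p^m}$ modified by a sum of $c_g$-translates of $h$ lying in the $W$-invariant subtorus $I(P^{[m]})_0$. From this one verifies $C_W((P\8)^{[m]})=C_W(P^{[m]})$, forcing the two $I$-constructions to coincide on identity components.

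For (d), the extension is constructed by first extending $\varphi|_{P^{[m]}}$, which is an $\calf$-morphism between subgroups of the abelian group $T$, to an automorphism $w\in W$, and then setting $\varphi\8(gh):=\varphi(g)\cdot w(h)$ for $g\in P$ and $h\in I(P^{[m]})_0$. The main obstacle is well-definedness and multiplicativity: one must verify that conjugation of $I(P^{[m]})_0$ by $g\in P$ is intertwined by $w$ with conjugation of $I(Q^{[m]})_0$ by $\varphi(g)\in Q$, which uses that $\varphi$ is a morphism in $\calf$ and that $I(P^{[m]})_0$ is determined by $C_W(P^{[m]})$. Uniqueness then follows because any two choices of $w$ extending $\varphi|_{P^{[m]}}$ differ by an element of $C_W(P^{[m]})$, which by definition of $I(P^{[m]})$ fixes $I(P^{[m]})_0$ pointwise, so the formula is unambiguous.
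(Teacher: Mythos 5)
The paper's own proof is just the one-line citation of \cite[Lemma 3.2 and Proposition 3.3]{BLO3}, so there is no internal argument to compare against; what you have written is a reconstruction of the proofs in that reference. Your outlines for (b) and (c), and your uniqueness argument in (d), are correct in substance: monotonicity of $A\mapsto I(A)_0$ under centralizer reversal gives (c), the computation that $(P^\bullet)^{[m]}=P^{[m]}\cdot I(P^{[m]})_0$ with $C_W$ unchanged gives (b), and the observation that $C_W(P^{[m]})$ fixes $I(P^{[m]})$ pointwise pins down $w(h)$ in (d).

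The places that are genuinely glossed over are the last sentence of (a) and well-definedness in (d). Knowing that $I(A)_0$ takes only finitely many values and that $S/T$ is finite does not immediately bound the number of $S$-conjugacy classes of the groups $P\cdot I(P^{[m]})_0$: you must still control the index $[P^\bullet\cap T:I(P^{[m]})_0]$, and then explain why, once $P^\bullet\cap T$ and the image of $P^\bullet$ in $S/T$ are each confined to finitely many possibilities, the subgroup $P^\bullet$ itself is pinned down up to finitely many $S$-conjugacy classes (there is a finite $H^1$-type count of extensions hiding here). Similarly, in (d) the formula $\varphi^\bullet(gh)=\varphi(g)\,w(h)$ is well defined only once one knows that $\varphi$ and $w$ agree on the overlap $P\cap I(P^{[m]})_0$, not merely on $P^{[m]}$; a priori they could differ there by $p^m$-torsion of $T$, and establishing this compatibility (or, equivalently, producing the extension via the extension axiom for $\calf$ and then identifying it with the formula) is where the real work of \cite[Proposition 3.3]{BLO3} lies. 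Your uniqueness remark is correct but addresses a different issue than well-definedness.
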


\begin{proof} Points (a)--(c) are stated as Lemma 3.2 in \cite{BLO3}, and 
(d) is stated as Proposition 3.3. 
\end{proof}

In particular, Proposition \ref{p:bullet}(b,c,d) implies that there is a well 
defined retraction functor $\calf\too\calf\8$ that sends an object $P$ to 
$P\8$ and a morphism $\varphi$ to $\varphi\8$. 

The following lemma is shown in \cite[Proposition 4.5]{BLO3} when 
$\call$ is a centric linking system, but we need it in a more general 
situation. 

\begin{Lem} \label{l:psi.}
Let $\calf$ be a saturated fusion system over a discrete $p$-toral group 
$S$, and let $\call$ be a linking system associated to $\calf$. 
\begin{enuma} 

\item For each $P,Q\in\Ob(\call)$ and each $\psi\in\Mor_\call(P,Q)$, there 
is a unique morphism $\psi\8\in\Mor_\call(P\8,Q\8)$ that restricts to 
$\psi$, and is such that $\pi(\psi\8)=(\pi(\psi))\8$. 

\item The space $|\call\8|$ is a deformation retract of $|\call|$, 
with retraction $|\call|\too|\call\8|$ induced by the functor that sends 
$P$ to $P\8$ and $\psi$ to $\psi\8$.

\end{enuma}
\end{Lem}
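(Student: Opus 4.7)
The plan is to prove (a) first, from which (b) follows formally. For (a), I would split into the fully centralized and general cases. Uniqueness is immediate: if $\psi_1\8,\psi_2\8\in\Mor_\call(P\8,Q\8)$ both restrict to $\psi$, then $\psi_1\8\circ\iota_P^{P\8}=\iota_Q^{Q\8}\circ\psi=\psi_2\8\circ\iota_P^{P\8}$, so $\psi_1\8=\psi_2\8$ by epi-ness of $\iota_P^{P\8}$ (Proposition \ref{p:L-prop}(d)); then $\pi(\psi\8)=\pi(\psi)\8$ is forced by the uniqueness in Proposition \ref{p:bullet}(d).

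The key input for existence is the centralizer identity $C_S(P\8)=C_S(P)$, which reduces to the equality $C_S(A)=C_S(I(A))$ for every $A\le T$: any $s\in C_S(A)$ has $c_s|_T\in C_W(A)$, and hence fixes $I(A)=C_T(C_W(A))$ pointwise. Applied to $A=P^{[m]}$ this yields $C_S(P)\le C_S(P^{[m]})=C_S(I(P^{[m]}))\le C_S(I(P^{[m]})_0)$, whence $C_S(P\8)=C_S(P)\cap C_S(I(P^{[m]})_0)=C_S(P)$. When $P$ is fully centralized, I would lift $\pi(\psi)\8\in\homf(P\8,Q\8)$ to some $\alpha\in\Mor_\call(P\8,Q\8)$ using Proposition \ref{p:L-prop}(a); the restriction $\alpha|_{P,Q}$ then lifts $\pi(\psi)$, so by the free transitive action of $\Ker(\pi_P)=\delta_P(C_S(P))$ one gets $s\in C_S(P)$ with $\alpha|_{P,Q}=\psi\circ\delta_P(s)$. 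Since $s\in C_S(P\8)$, the automorphism $\delta_{P\8}(s)$ lies in $\Ker(\pi_{P\8})$, so $\psi\8:=\alpha\circ\delta_{P\8}(s)^{-1}$ has $\pi(\psi\8)=\pi(\psi)\8$ and, by compatibility of $\delta$ with restriction, restricts to $\psi$.

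For general $P$ I would reduce to the fully centralized case: choose a fully centralized $P_*$ in the $\calf$-conjugacy class of $P$, and lift some $\chi\in\Iso_\calf(P_*,P)$ to an isomorphism $\5\chi\in\Iso_\call(P_*,P)$ (lift both $\chi$ and $\chi^{-1}$ and correct their composite using the group structure on $\Aut_\call(P_*)$, with Proposition \ref{p:L-prop}(d) upgrading a one-sided inverse to a two-sided one). The fully centralized case applied to $\5\chi$ and to $\psi\circ\5\chi$ then produces $\5\chi\8\:P_*\8\to P\8$ and $(\psi\circ\5\chi)\8\:P_*\8\to Q\8$; the same argument inverts $\5\chi\8$ in $\call$ (using $\pi(\5\chi\8)=\chi\8$ is still an isomorphism in $\calf$). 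Setting $\psi\8:=(\psi\circ\5\chi)\8\circ(\5\chi\8)^{-1}$ then gives the required extension, and chasing restriction of the defining relation $(\psi\circ\5\chi)\8\circ\iota_{P_*}^{P_*\8}=\iota_Q^{Q\8}\circ(\psi\circ\5\chi)$ along $(\5\chi\8)^{-1}$ yields $\psi\8|_{P,Q}=\psi$.

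For (b), the uniqueness in (a) makes $P\mapsto P\8$, $\psi\mapsto\psi\8$ into a functor $b\:\call\to\call\8$ retracting the inclusion (functoriality and $b\circ\text{incl}=\Id_{\call\8}$ both follow from uniqueness), while the morphisms $\iota_P^{P\8}$ assemble into a natural transformation $\Id_\call\Rightarrow\text{incl}\circ b$ (naturality is exactly the defining relation $\psi\8\circ\iota_P^{P\8}=\iota_Q^{Q\8}\circ\psi$), realizing $|\call\8|$ as a deformation retract of $|\call|$. The main obstacle is the centralizer identity together with the reduction from an arbitrary $P$ to a fully centralized representative; once these are in hand, the rest is formal manipulation of the axioms in Proposition \ref{p:L-prop}.
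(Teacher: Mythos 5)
Your proposal is correct and follows essentially the same route as the paper's proof: reduce to a fully normalized/centralized representative, use the centralizer identity $C_S(P\8)=C_S(P)$ together with the free transitive action of $\delta_P(C_S(P))$ on the fiber $\pi_{P,Q}^{-1}(\pi(\psi))$ to show the restriction map $\pi_{P\8,Q\8}^{-1}(\pi(\psi)\8)\to\pi_{P,Q}^{-1}(\pi(\psi))$ is a bijection, then deduce (b) formally from the natural transformation $\iota_P^{P\8}$. The only cosmetic differences are that you prove the centralizer identity directly (the paper cites Gonzalez, Proposition 1.13(v)) and you spell out the reduction to the fully centralized case, which the paper leaves as a one-line remark; both are filled-in details rather than a different argument.
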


\begin{proof} \textbf{(a) } It suffices to show this when $P$ is fully 
normalized in $\calf$. Set $\varphi=\pi(\psi)\in\homf(P,Q)$. By Proposition 
\ref{p:bullet}(d), there is a unique $\varphi\8\in\homf(P\8,Q\8)$ that 
extends $\varphi$. By \cite[Proposition 1.13(v)]{Gonzalez}, we have 
$C_S(P)=C_S(P\8)$, and by Proposition \ref{p:L-prop}(a), this group acts 
freely and transitively on the sets $\pi_{P,Q}^{-1}(\varphi)$ and 
$\pi_{P\8,Q\8}^{-1}(\varphi\8)$. The restriction map from 
$\pi_{P\8,Q\8}^{-1}(\varphi\8)$ to $\pi_{P,Q}^{-1}(\varphi)$ commutes with 
this action, and hence is a bijection.

\smallskip

\noindent\textbf{(b) } Point (a) implies that there is a well defined functor 
$(-)\8\:\call\too\call\8$ that sends $P$ to $P\8$ and $\psi$ to $\psi\8$. 
This defines a map $r\:|\call|\too|\call\8|$, which is a retraction by 
Proposition \ref{p:bullet}(b). Let $i$ denote the inclusion; then $i\circ 
r$ homotopic to the identity on $|\call|$ since there is a natural 
transformation of functors from $(-)\8$ to the identity that sends each 
object $P$ to the inclusion $\iota_{P,P\8}$ from $P$ to $P\8$. 
\end{proof}

We now combine Lemmas \ref{l:|L0|simeq|L|} and \ref{l:psi.} to get the 
result we need.

\begin{Prop} \label{p:dropmore}
Let $\calf$ be a saturated fusion system over a discrete $p$-toral group 
$S$, and let $\call_0\subseteq\call$ be a pair of linking systems 
associated to $\calf$. Then the inclusion of $|\call_0|$ in $|\call|$ is a 
homotopy equivalence.
\end{Prop}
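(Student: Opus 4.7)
The plan is to interpolate between $\call_0$ and $\call$ via an intermediate linking system $\call'$, defined as the full subcategory of $\call$ on the object set
\[
\Ob(\call')=\Ob(\call_0)\cup\{P\in\Ob(\call)\mid P\8\in\Ob(\call_0)\}.
\]
I would first verify that $\call'$ is a linking system: $\calf$-conjugacy closure follows from the $\calf$-naturality of the bullet (Proposition \ref{p:bullet}(d)); overgroup closure follows since Proposition \ref{p:bullet}(c) together with the overgroup closure of $\Ob(\call_0)$ gives $R\8\ge P\8\in\Ob(\call_0)$ whenever $P\le R$ with $P\8\in\Ob(\call_0)$; and all $\calf$-centric $\calf$-radical subgroups lie in $\Ob(\call_0)\subseteq\Ob(\call')$. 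It then suffices to show that each of the two inclusions $|\call_0|\hookrightarrow|\call'|$ and $|\call'|\hookrightarrow|\call|$ is a homotopy equivalence.

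For $|\call_0|\hookrightarrow|\call'|$, I would exploit that every bullet-closed object of $\Ob(\call')$ lies in $\Ob(\call_0)$, so $\Ob(\call'\8)\subseteq\Ob(\call_0)$. The deformation retraction $|\call'|\to|\call'\8|$ of Lemma \ref{l:psi.}(b) therefore factors through $|\call_0|$, and its restriction to $|\call_0|$ is the analogous retraction $|\call_0|\to|\call_0\8|$ for $\call_0$; a short diagram chase with these two retractions exhibits $|\call_0|\hookrightarrow|\call'|$ as a homotopy equivalence. For $|\call'|\hookrightarrow|\call|$, the difference $\Ob(\call)\sminus\Ob(\call')$ consists of those $P$ whose bullet lies in one of the finitely many bullet-closed $\calf$-conjugacy classes $\calq_1,\dots,\calq_n$ contained in $\Ob(\call)\sminus\Ob(\call_0)$ (finiteness by Proposition \ref{p:bullet}(a)). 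I would then iterate Lemma \ref{l:|L0|simeq|L|}, removing at each step a class of minimum order from the current difference; minimum-order selection preserves overgroup closure, since any potential overgroup violation would contradict either the overgroup closure of $\Ob(\call')$ or the minimality of the chosen order. Once all the $\calq_i$ have been removed, any surviving $P$ in the difference would need $P\8\in\{\calq_i\}$ to lie in the current object set by overgroup closure, a contradiction; hence the difference vanishes.

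The principal obstacle is verifying that this second iteration is finite, which requires that only finitely many $\calf$-conjugacy classes of subgroups of $S$ of order bounded by $\max_i|\calq_i|$ appear in $\Ob(\call)\sminus\Ob(\call')$. This follows from the finiteness of $\calf$-conjugacy classes of subgroups of given rank and discrete quotient order in a saturated fusion system over a discrete $p$-toral group. Granted this, composing the two homotopy equivalences yields the desired $|\call_0|\simeq|\call|$.
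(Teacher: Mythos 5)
The first half of your argument is sound and mirrors the paper's use of Lemma \ref{l:psi.}(b): your $\call'$ is exactly the bullet-saturation of $\call_0$, one has $\Ob(\call'\8)=\Ob(\call_0\8)$, and the two retractions onto $|\call_0\8|=|\call'\8|$ do exhibit $|\call_0|\hookrightarrow|\call'|$ as a homotopy equivalence.

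The second half has a genuine gap: the finiteness claim on which it rests is false. You argue that every $P\in\Ob(\call)\sminus\Ob(\call')$ satisfies $P\le P\8$ with $P\8$ in one of finitely many bullet classes $\calq_1,\dots,\calq_n$, and then invoke ``finiteness of $\calf$-conjugacy classes of subgroups of given rank and discrete quotient order.'' But the subgroups of a single infinite discrete $p$-toral group already fall into unboundedly many orders, so bounding $P$ above by $\max_i\calq_i$ imposes no useful finiteness. Concretely, take $p\ge 3$, $S=(\Z/p^\infty)^3\rtimes\Z/p$ with $\Z/p$ permuting the torus coordinates cyclically, and $\calf=\calf_S(S)$. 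The diagonal torus $\Delta=\{(t,t,t)\}\cong\Z/p^\infty$ is central in $S$, hence not $\calf$-centric, and one checks from Definition \ref{d:bullet} that $P\8=\Delta$ for every $P\le\Delta$. Taking $\call_0$ to be the centric linking system and $\call$ the transporter/linking system on all subgroups, every one of the countably many subgroups $1<\Z/p<\Z/p^2<\cdots<\Delta$ lies in $\Ob(\call)\sminus\Ob(\call')$, and since $\Delta\le Z(S)$ each of these is its own $\calf$-conjugacy class. Your class-by-class removal therefore never terminates, and there is no way to pass to a limit within the stated argument.

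The paper circumvents this precisely by refusing to enumerate the classes in $\Ob(\call)\sminus\Ob(\call')$. Instead it argues by contradiction: among all linking subsystems $\call_1$ with $\call_0\subseteq\call_1\subseteq\call$ and $|\call_0|\simeq|\call_1|$, choose one for which $\Ob(\call_1\8)$ meets the maximal number of $\calf$-conjugacy classes --- a finite quantity by Proposition \ref{p:bullet}(a), so the maximum is attained. Bullet-saturating $\call_1$ to $\call_2$ (your $\call'$ step, applied to $\call_1$ rather than $\call_0$) gives $|\call_1|\simeq|\call_2|$ by Lemma \ref{l:psi.}(b). If $\call_2\subsetneq\call$, pick $P$ \emph{maximal} in $\Ob(\call\8)\sminus\Ob(\call_2)$; maximality plus bullet-saturation of $\call_2$ force $P$ to be maximal among all objects of $\call$ outside $\call_2$, so $\Ob(\call_2)\cup P^\calf$ is overgroup-closed and Lemma \ref{l:|L0|simeq|L|} applies to \emph{add} one class. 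The resulting $\call_3$ has $\Ob(\call_3\8)$ meeting strictly more classes than $\Ob(\call_1\8)$, contradicting the choice of $\call_1$. Thus the single quantity being controlled is always the (finite) number of bullet classes present, never the possibly infinite collection of non-bullet classes; that is the idea your proposal is missing.
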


\newcommand{\xxset}{\textbf{\textit{L}}}

\begin{proof} Let $\xxset$ be the set of all linking subsystems 
$\call'\subseteq\call$ containing $\call_0$ such that the inclusion 
$|\call_0|\subseteq|\call'|$ is a homotopy equivalence. We must show that 
$\call\in\xxset$.

Assume otherwise, and choose $\call_1\in\xxset$ for which 
$\Ob(\call_1\8)$ contains the largest possible number of 
$\calf$-conjugacy classes.  Let $\call_2\subseteq\call$ be the full 
subcategory with $\Ob(\call_2)=\{P\in\Ob(\call)\,|\,P\8\in\Ob(\call_1)\}$.  
By Lemma \ref{l:psi.}(b), $|\call_1\8|=|\call_2\8|$ is a 
deformation retract of $|\call_1|$ and of $|\call_2|$, so 
$|\call_1|\simeq|\call_2|$, and $\call_2\in\xxset$. 

Since $\call_2\subsetneqq\call$ by the assumption that 
$\call\notin\xxset$, we have $\Ob(\call\8)\not\subseteq\Ob(\call_2)$. 
Let $P$ be maximal among objects in $\call\8$ not in $\call_2$.  By 
definition of $\Ob(\call_2)$, $P$ is maximal among all objects of $\call$ 
not in $\call_2$.  Let $\call_3\subseteq\call$ be the full subcategory with 
$\Ob(\call_3)=\Ob(\call_2)\cup{}P^\calf$.  

By Lemma \ref{l:|L0|simeq|L|}, the inclusion of $|\call_2|$ into 
$|\call_3|$ is a homotopy equivalence.  Hence $\call_3\in\xxset$, 
contradicting our maximality assumption on $\call_1\8$.  
\end{proof}


\section{\texorpdfstring{$\Lambda$}{Lambda}-functors}
\label{s:Lambda}

For any group $G$, let $\calo_p(G)$ be the \emph{$p$-subgroup orbit 
category} of $G$: the category whose objects are the $p$-subgroups of $G$, 
and where 
	\[ \Mor_{\calo_p(G)}(P,Q) =  \bigl\{Qg\in Q{\backslash}G \,\big|\, 
	\9gP\le Q \bigr\} = Q{\backslash}\{g\in G\,|\, \9gP\le Q\}. \]
Note that there is a bijection 
	\[ \Mor_{\calo_p(G)}(P,Q) \Right3{\cong} \map_G(G/P,G/Q) \]
that sends a coset $Qg$ to the $G$-map $\bigl(xP\mapsto xg^{-1}Q\bigr)$.

\begin{Defi} \label{d:Lambda}
Let $G$ be a locally finite group, and let $M$ be a $\Z G$-module. 
Define a functor $F_M\:\calo_p(G)\op\too\Ab$ by setting
	\[ F_M(P) = \begin{cases} 
	M & \textup{if $P=1$} \\
	0 & \textup{if $P\ne1$}
	\end{cases} \]
for each $p$-subgroup $P\le G$. Here, $\Aut_{\calo_p(G)}(1)\cong G$ has 
the given action on $M$. Set 
	\[ \Lambda^*(G;M) = \higherlim{\calo_p(G)}*(F_M) . \]
\end{Defi}

We refer to \cite[Definition A.1]{BLO6} for the definition of a 
transporter system. As in Section 1, when $S$ is a group and $\calh$ is a set 
of subgroups of $S$, we let $\calt_\calh(S)$ be the transporter category of $S$ 
with object set $\calh$, where $\Mor_{\calt_\calh(S)}(P,Q)$ is the 
set of all $g\in S$ such that $\9gP\le Q$. A transporter 
system associated to a fusion system $\calf$ over a discrete $p$-toral 
group $S$ consists of a category $\calt$ whose objects are subgroups of 
$S$, together with a pair of functors 
	\[ \calt_{\Ob(\calt)}(S) \Right4{\gee} \calt \Right4{\rho} \calf, 
	\]
such that $\gee$ is the identity on objects and injective on morphism sets, 
$\rho$ is the inclusion on objects and surjective on morphism sets, and 
several other conditions are satisfied. The only requirements on the set 
$\Ob(\calt)$ are that it be nonempty and closed under $\calf$-conjugacy and 
overgroups. 

As with linking systems, when $P\le Q$ are objects in a transporter system 
$\calt$, we write $\iota_{P,Q}=\gee_{P,Q}(1)\in\Mor_\calt(P,Q)$ for the 
inclusion morphism. By ``extensions'' and ``restrictions'' of morphisms we 
mean extensions and restrictions of source and target both, with respect to 
these inclusions. We will only need to refer to the following properties:

\begin{Prop} \label{p:T-prop}
The following hold for any transporter system $\calt$ associated to 
a saturated fusion system $\calf$ over a discrete $p$-toral group $S$.
\begin{enuma} 

\item For each $P,Q\in\Ob(\calt)$, the composite 
$\rho_{P,Q}\circ\gee_{P,Q}$ sends $g\in N_S(P,Q)$ to the homomorphism 
$c_g=(x\mapsto {}^gx) \in\homf(P,Q)$. 

\item Let $P\nsg \widebar{P}\le S$ and $Q\le\widebar{Q}\le S$ be objects in 
$\calt$.  Let $\psi\in\Iso_\calt(P,Q)$ be such that for each 
$g\in\widebar{P}$, there is $h\in\widebar{Q}$ satisfying 
$\iota_{Q,\widebar{Q}}\circ\psi\circ\gee_P(g) 
=\gee_{Q,\widebar{Q}}(h)\circ\psi$.   Then $\psi$ extends to a unique 
morphism $\widebar{\psi}\in\Mor_\calt(\widebar{P},\widebar{Q})$ such that 
$\widebar{\psi}|_{P,Q}=\psi$. 

\item All morphisms in $\calt$ are monomorphisms and epimorphisms in the 
categorical sense.

\end{enuma}
\end{Prop}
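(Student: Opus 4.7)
The plan is to deduce both parts from the axioms of a transporter system set out in \cite[Definition A.1]{BLO6}, paralleling the arguments for linking systems recorded as Proposition \ref{p:L-prop}(c,d).

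Part (a) is essentially a direct restatement of the extension axiom (axiom (II)) built into the definition of a transporter system. The hypothesis that for every $g\in\widebar{P}$ there exists $h\in\widebar{Q}$ with $\iota_{Q,\widebar{Q}}\circ\psi\circ\gee_P(g)=\gee_{Q,\widebar{Q}}(h)\circ\psi$ is precisely the normalizer/compatibility condition required by that axiom; applying it produces the unique extension $\widebar{\psi}\in\Mor_\calt(\widebar{P},\widebar{Q})$ restricting to $\psi$. So part (a) is immediate once the two hypotheses are matched, with uniqueness built into the axiom itself.

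For the monomorphism half of (b), I would mimic the argument for linking systems. Suppose $\psi\in\Mor_\calt(P,Q)$ and $\alpha,\beta\in\Mor_\calt(R,P)$ satisfy $\psi\circ\alpha=\psi\circ\beta$. Applying the functor $\rho$ yields $\rho(\psi)\circ\rho(\alpha)=\rho(\psi)\circ\rho(\beta)$ in $\calf$; since $\rho(\psi)$ is an injective group homomorphism (hence categorically monic in $\calf$), this forces $\rho(\alpha)=\rho(\beta)$. By the transporter system axioms, the fiber $\rho_{R,P}^{-1}(\rho(\alpha))$ is a torsor under a free precomposition action of $\Ker(\rho_R)\le\Aut_\calt(R)$, so $\beta=\alpha\circ\eta$ for some $\eta\in\Ker(\rho_R)$. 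Substituting into the original equation, $\eta$ stabilizes $\psi\circ\alpha\in\Mor_\calt(R,Q)$ under the same free action, whence $\eta=\Id_R$ and $\alpha=\beta$.

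The main obstacle is the epimorphism half of (b), since morphisms in $\calf$ are injective homomorphisms but not in general categorically epic, so one cannot simply dualize the monomorphism argument through $\rho$. I would follow the strategy of \cite[Proposition A.2]{BLO3}: given $\alpha,\beta\in\Mor_\calt(Q,T)$ with $\alpha\circ\psi=\beta\circ\psi$, push down to $\calf$ to obtain $\rho(\alpha)$ and $\rho(\beta)$ agreeing on $\Im(\rho(\psi))$, then use part (a) to extend suitable restrictions and the free-torsor structure of $\rho$-fibers to reduce the claim to saying that a certain element of $\Ker(\rho_Q)$ stabilizes $\alpha\circ\psi$ under a free precomposition action; freeness then forces that element to be trivial, giving $\alpha=\beta$. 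The only real subtlety is verifying the hypothesis of (a) in the correct form; once that is arranged, the conclusion carries over verbatim from the linking system case.
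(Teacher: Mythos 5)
Your handling of part (a) and the monomorphism half of part (b) is fine; the paper itself disposes of the whole proposition by citation, observing that (a) \emph{is} axiom (II) in the definition of a transporter system and that (b) is \cite[Proposition A.2(d)]{BLO6}, so your only real difference is that you re-derive (b) from the axioms rather than quoting it. Your monomorphism argument is correct: free action of $\Ker(\rho_R)$ by right composition on $\Mor_\calt(R,P)$ and on $\Mor_\calt(R,Q)$, plus injectivity of $\rho(\psi)$, gives exactly what you want.

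The epimorphism sketch, however, has a genuine gap. You propose to ``reduce the claim to saying that a certain element of $\Ker(\rho_Q)$ stabilizes $\alpha\circ\psi$ under a free precomposition action.'' To get $\alpha$ and $\beta$ into the same $\Ker(\rho_Q)$-orbit in $\Mor_\calt(Q,T)$ you first need $\rho(\alpha)=\rho(\beta)$, but from $\alpha\circ\psi=\beta\circ\psi$ you only learn that $\rho(\alpha)$ and $\rho(\beta)$ agree on $\rho(\psi)(P)$, and morphisms in $\calf$ are precisely \emph{not} categorically epic, so this is not automatic. Moreover, even granting $\beta=\alpha\circ\eta$ with $\eta\in\Ker(\rho_Q)$, cancelling $\alpha$ (which is mono) leaves $\eta\circ\psi=\psi$, an equation about $\eta$ acting by \emph{left} composition on $\Mor_\calt(P,Q)$; the transporter-system axioms only give a free \emph{left} action of $\gee_Q(C_S(Q))$, which in a general transporter system is a proper subgroup of $\Ker(\rho_Q)$, so freeness does not finish the job. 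The mechanism that actually kills $\eta$ (and more directly shows $\alpha=\beta$) is the \emph{uniqueness} clause in part (a): after replacing $\psi$ by $\iota_{P'}^Q$ with $P'=\rho(\psi)(P)$, both $\alpha$ and $\beta$ are extensions of the same morphism $P'\to T$, and one applies the uniqueness in (a) up a normalizer tower $P'\nsg N_Q(P')\nsg\cdots$ until reaching $Q$ (each step is strict since $Q$ is discrete $p$-toral), checking the compatibility hypothesis at each stage via axiom (C). You should base the argument on that, not on the torsor structure of the $\rho$-fibers.
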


\begin{proof} Point (a) is axiom (B) in \cite[Definition A.1]{BLO6}, 
and point (c) is shown in \cite[Proposition A.2(d)]{BLO6}.

The existence of an extension in 
point (b) is axiom (II) in \cite[Definition A.1]{BLO6}, except that 
it is stated there under the additional assumption that $Q$ is normal in 
$\widebar{Q}$. But if $Q\le\widebar{Q}$ is not normal, and 
$\psi\in\Iso_\calt(P,Q)$, $g\in\widebar{P}$, and $h\in\widebar{Q}$ are as 
above, then 
	\[ Q = \Im(\rho(\iota_{Q,\4Q}\circ\psi\circ\gee_P(g)) ) = 
	\Im(\rho(\gee_{Q,\widebar{Q}}(h)\circ\psi))
	= {}^hQ, \]
the last equality by (a), and so $h\in N_{\widebar{Q}}(Q)$. Hence $\psi$ 
extends to $\5\psi\in\Mor_\calt(\widebar{P},N_{\widebar{Q}}(Q))$ by axiom 
(II) as stated in \cite{BLO6}, and we can take 
$\widebar{\psi}=\iota_{N_{\4Q}(Q),\4Q}\circ\5\psi$. 

The extension in (b) is unique since inclusion morphisms in $\calt$ are 
epimorphisms by (c). 
\end{proof}

If $\calt$ is a transporter system over a discrete $p$-toral group $S$, 
then its orbit category $\calo(\calt)$ is the category with the same 
objects, and where for each $P,Q\in\Ob(\calt)$,
	\[ \Mor_{\calo(\calt)}(P,Q) = \gee_Q(Q){\backslash}\Mor_\calt(P,Q). 
	\]
Thus, for example, if $\calt$ is the transporter system of a group $G$ (with 
some set of objects), then $\calo(\calt)$ is the orbit category of $G$ in 
the above sense.

\begin{Prop} \label{p:lim*=Lambda*}
Let $\calt$ be a transporter system associated to a saturated fusion system 
$\calf$ over a discrete $p$-toral group $S$. Fix $P\in\Ob(\calt)$, and let 
	\[ \Phi\: \calo(\calt)\op \Right4{} \Ab \]
be a functor such that $\Phi(Q)=0$ for each $Q\notin P^\calf$. Then 
	\[ \higherlim{\calo(\calt)}*(\Phi) \cong 
	\Lambda^*(\Aut_{\calo(\calt)}(P);\Phi(P)). \]
\end{Prop}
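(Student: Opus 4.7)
My plan is to mirror the proof of \cite[Proposition 5.4]{BLO3} for orbit categories of saturated fusion systems, adapting the argument to the transporter setting using the tools collected in Proposition \ref{p:T-prop}.

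The first reduction is on the indexing category. Because every morphism in $\calt$ is injective on underlying groups (Proposition \ref{p:T-prop}(b)), any morphism in $\calo(\calt)$ with source in $P^\calf$ has target containing an $\calf$-conjugate of $P$. Let $\calc\subseteq\calo(\calt)$ be the full subcategory on those objects. Every non-degenerate simplex of the bar complex computing $\lim^*_{\calo(\calt)}(\Phi)$ that carries nonzero $\Phi$-data lies in $\calc$, so $\lim^*_{\calo(\calt)}(\Phi)\cong\lim^*_\calc(\Phi|_\calc)$; after this I would replace $P$ by a fully $\calf$-normalized representative, which does not change the answer.

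Next I would construct a comparison functor $F\colon\calo_p(G)\to\calc$ with $F(1)=P$, where $G=\Aut_{\calo(\calt)}(P)$. By the Sylow-type axiom for transporter systems and the normalisation of $P$, the image $U$ of $\gee_P(N_S(P))$ under $\Aut_\calt(P)\twoheadrightarrow G$ is a Sylow $p$-subgroup of $G$. For each $p$-subgroup $H\le U$, set $F(H)=Q_H\le N_S(P)$ to be the unique subgroup containing $P$ with $\gee_P(Q_H)/\gee_P(P)=H$. The extension axiom Proposition \ref{p:T-prop}(a), together with its uniqueness clause, translates each morphism in $\calo_p(G)$ between $p$-subgroups of $U$ into a morphism in $\calt$ between their $F$-images; since every $p$-subgroup of $G$ is $G$-conjugate to one in $U$, the full subcategory on $p$-subgroups of $U$ is equivalent to $\calo_p(G)$, and one extends $F$ accordingly. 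By design $Q_H\supsetneq P$ whenever $H\neq 1$, so $F^*\Phi=F_M$: the value at $1$ is $\Phi(P)=M$ with its intrinsic $G$-action, and the value at any $H\neq 1$ is $\Phi(Q_H)=0$.

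The crucial third step is to show that $F^*$ induces an isomorphism $\lim^*_\calc(\Phi)\xrightarrow{\cong}\lim^*_{\calo_p(G)}(F_M)=\Lambda^*(G;M)$. I plan to effect a direct bar-complex comparison: a non-degenerate simplex contributing to $\lim^*_\calc(\Phi)$ is a chain $Q_0\xrightarrow{\psi_1}Q_1\to\cdots\to Q_n$ with $Q_0\in P^\calf$, and after choosing an isomorphism $P\to Q_0$ in $\calo(\calt)$ and applying Proposition \ref{p:T-prop}(a) to rewrite $\psi_1$ as the restriction of an extension over the canonical normaliser of $\rho(\psi_1)(P)\le Q_1$, one encodes the chain as a chain of $p$-subgroups of $G$ with matching coset data, giving a bar simplex for $\Lambda^*(G;M)$ with the same constant value $M$. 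The main obstacle is verifying that this correspondence is a natural simplicial isomorphism, which reduces to controlling the over-categories $F{\downarrow}Q$ — equivalently, the $G$-action on $\Mor_{\calo(\calt)}(P,Q)$ — for those $Q\in\calc$ which strictly exceed every $\calf$-conjugate of $P$. One must show that stabilisers of such morphisms in $G$ are precisely the $p$-subgroups realised by $F$, with transitive action matching the $\calo_p(G)$-cosets, and check compatibility with face and degeneracy maps. This is where the transporter-system axioms are used more substantially than in the linking-system case, because morphisms in $\calt$ are not determined by their underlying group homomorphisms.
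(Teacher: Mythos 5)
Your first two steps line up with the paper's setup: after reducing to $P$ fully normalized, the paper also takes $\Gamma=\Aut_{\calo(\calt)}(P)$, observes via the Sylow axiom that the image of $\gee_P(N_S(P))$ is a Sylow $p$-subgroup, lets $\calh$ be the subgroups of $N_S(P)/P$, and builds a functor $\alpha\colon\calo_\calh(\Gamma)\to\calo(\calt)$, $\alpha(Q/P)=Q$, using the extension axiom (your $F$). The divergence, and the gap, is in step 3.

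You propose to finish by a direct bar-complex comparison and to produce ``a natural simplicial isomorphism.'' That is not expected to exist, and you do not establish it; you only describe what ``one must show.'' The bar cochain complex of $\calo(\calt)$ with coefficients in $\Phi$ and the bar cochain complex of $\calo_p(\Gamma)$ with coefficients in $F_M$ are not isomorphic complexes: $\alpha$ is neither full nor essentially surjective onto the objects that carry contributing chains, and different $\calo(\calt)$-chains can collapse to the same $\calo_p(\Gamma)$-chain while others have no preimage. Moreover your key claim --- that stabilisers of morphisms $P\to Q$ in $\Gamma$ are ``precisely the $p$-subgroups realised by $F$, with transitive action matching the $\calo_p(G)$-cosets'' --- is stronger than what is true: what holds (and is all that is needed) is that for $Q\notin P^\calf$ the isotropy subgroups are nontrivial discrete $p$-toral groups, hence $\Gamma$-conjugate to subgroups in $\calh$, together with a lifting/extension property for morphisms whose class is stabilised by a subgroup of $N_S(P)/P$. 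The paper handles the homological algebra by verifying exactly these four hypotheses (isomorphism on automorphisms of $P$; the isotropy condition just described, via the extension axiom and \cite[Lemma 1.8]{BLO3}; all morphisms in $\calo(\calt)$ are epimorphisms, via Proposition \ref{p:T-prop}(b); and the extension property, via Proposition \ref{p:T-prop}(a) and (b)) and then quoting \cite[Proposition 5.3]{BLO3}, which is the general lemma engineered to replace the bar-complex manipulation you are attempting. As written, your argument would need either to reprove that lemma or to replace the claimed simplicial isomorphism with a genuine filtration/spectral-sequence comparison; neither is carried out.
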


\begin{proof} By axiom (I) for a transporter system (see 
\cite[Definition A.1]{BLO6}), there is $Q\in P^\calf$ such that the index 
of $\gee_Q(N_S(Q))$ in $\Aut_\calt(Q)$ is finite and prime to $p$. Thus 
$\Aut_\calt(Q)$ is an extension of a discrete $p$-toral group by a finite 
group, and we write $\gee_Q(N_S(Q))\in\sylp{\Aut_\calt(Q)}$ for short. 
Since $\Lambda^*(\Aut_{\calo(\calt)}(Q);\Phi(Q))\cong 
\Lambda^*(\Aut_{\calo(\calt)}(P);\Phi(P))$ for each $Q\in P^\calf$, it 
suffices to prove the proposition when 
$\gee_P(N_S(P))\in\sylp{\Aut_\calt(P)}$. We will show that this 
is a special case of \cite[Proposition 5.3]{BLO3}. 

Set $\Gamma=\Aut_{\calo(\calt)}(P)=\gee_P(P){\backslash}\Aut_\calt(P)$. To 
simplify the notation, we identify $N_S(P)$ with its image 
$\gee_P(N_S(P))\le\Aut_\calt(P)$, and identify $N_S(P)/P$ with 
$\gee_P(N_S(P))/\gee_P(P)$. 

Let $\calh$ be the set of all subgroups of $N_S(P)/P\in\sylp\Gamma$. Define 
	\[ \alpha \: \calo_\calh(\Gamma) \Right4{} \calo(\calt) \]
by setting $\alpha(Q/P)=Q$, and 
	\[ \alpha(Q/P \xto{~R\gamma~} R/P) = [\4\gamma]\in 
	\Mor_{\calo(\calt)}(Q,R) \]
where $\gamma\in\Aut_\calt(P)$ extends to a unique morphism 
$\4\gamma\in\Mor_\calt(Q,R)$ by Proposition \ref{p:T-prop}(b). 

The proposition will follow immediately from \cite[Proposition 5.3]{BLO3} 
once we have shown that conditions (a)--(d) in the proposition all hold. 
Set $c_0=\alpha(1)$, following the notation used in that proposition, and 
note that $c_0=P$. 
\begin{enuma} 

\item For each $P\in\Ob(\calt)$ and each $\psi\in\End_\calt(P)$, the 
homomorphism $\rho(\psi)\in\End_\calf(P)$ is an isomorphism of groups, and 
hence $\psi\in\Aut_\calt(P)$ by \cite[Proposition A.2(c)]{BLO6}. So 
by construction, $\alpha$ sends 
$\Gamma=\Aut_{\calo_\calh(\Gamma)}(1)$ isomorphically to 
$\End_{\calo(\calt)}(P)=\Aut_{\calo(\calt)}(P)$. 

\item Let $U\in\Ob(\calt)$ be such that $U\notin P^\calf$. We must show 
that all isotropy subgroups of the $\Gamma$-action on 
$\Mor_{\calo(\calt)}(P,U)$ are nontrivial and conjugate in $\Gamma$ to 
subgroups in $\calh$. Since $\calh$ is the set of all subgroups of a Sylow 
$p$-subgroup of $\Gamma$, where $\Gamma$ has a discrete $p$-toral 
subgroup of finite index, this means showing that each isotropy subgroup 
is a nontrivial discrete $p$-toral subgroup. 

Fix $\psi\in\Mor_\calt(P,U)$, and let $[\psi]$ be its class in 
$\Mor(\calo(\calt))$. Set $Q=\rho(\psi)(P)<U$. Then 
$\psi=\iota_{Q,U}\circ(\psi|_{P,Q})$, where $\psi|_{P,Q}\in\Iso_\calt(P,Q)$. 
So the isotropy subgroup for the action of $\Gamma$ on $\psi$ is isomorphic 
to the isotropy subgroup of $\Aut_{\calo(\calt)}(Q)$ on $\iota_{Q,U}$. 

For each $\gamma\in\Aut_\calt(Q)$, we have 
$[\iota_{Q,U}\gamma]=[\iota_{P,U}]$ 
if and only if there is $g\in U$ such that ${}^gQ=Q$ and 
$\gamma=\gee_U(g)|_{Q,Q}=\gee_Q(g)$. Thus the isotropy subgroup is the 
group of all $[\gee_Q(g)]$ for $g\in N_U(Q)$, hence isomorphic to 
$N_U(Q)/Q$, which is a nontrivial discrete $p$-toral group by 
\cite[Lemma 1.8]{BLO3} and since $Q<U$ are both discrete $p$-toral groups.

\item We claim that all morphisms in $\calo(\calt)$ are epimorphisms in the 
categorical sense. To see this, fix subgroups $P,Q,R\in\Ob(\calt)$ and 
morphisms $\varphi\in\Mor_{\calt}(P,Q)$ and 
$\alpha,\beta\in\Mor_{\calt}(Q,R)$ such that 
$[\alpha][\varphi]=[\beta][\varphi]$, where $[-]$ denotes the class in 
$\calo(\calt)$ of a morphism in $\calt$. Thus there is $g\in R$ such that 
$\alpha\varphi=\gee_R(g)\beta\varphi$ in $\calt$. 

By Proposition \ref{p:T-prop}(c), all morphisms in $\calt$ are 
epimorphisms. So $\alpha=\gee_R(g)\beta$, and hence $[\alpha]=[\beta]$, 
proving that $\varphi$ is an epimorphism in $\calo(\calt)$. 


\item Fix $Q/P\le N_S(P)/P$, $U\in\Ob(\calt)$, and 
$\varphi\in\Mor_\calt(P,U)$ such that $[\varphi\gee_P(g)]=[\varphi]$ in 
$\Mor_{\calo(\calt)}(P,U)$ for each $g\in Q$. We must show that there is 
$\4\varphi\in\Mor_\calt(Q,U)$ such that $[\4\varphi|_{P,U}]=[\varphi]$ in 
$\Mor_{\calo(\calt)}(P,U)$. 

By assumption, for each $g\in Q$, there is $u\in U$ such that 
$\gee_U(u)\varphi=\varphi\gee_P(g)$. 
Set $U_0=\rho(\varphi)(P)\le U$. Thus $\varphi=\iota_{U_0,U}\varphi_0$, 
where $\varphi_0=\varphi|_{P,U_0}\in\Iso_\calt(P,U_0)$. So 
	\[ \gee_U(u) \iota_{U_0,U}\varphi_0= \iota_{U_0,U} 
	\varphi_0\gee_P(g), \]
hence $U_0=\Im(\rho(\varphi_0\gee_P(g))) 
=\Im(\rho(\gee_U(u)\iota_{U_0,U}\varphi_0)) ={}^uU_0$ (the last equality 
by Proposition \ref{p:T-prop}(a)), and so $u\in N_U(U_0)$. 
Thus $\iota_{U_0,U}\varphi_0\gee_P(g)\varphi_0^{-1} = 
\gee_U(u)\iota_{U_0,U}=\iota_{U_0,U}\gee_{U_0}(u)$. Since morphisms in a 
transporter category are monomorphisms by Proposition \ref{p:T-prop}(c), 
this implies that $\varphi_0\gee_P(g)\varphi_0^{-1} 
\in\gee_{U_0}(N_U(U_0))$. So by Proposition \ref{p:T-prop}(b), there is 
$\4\varphi\in\Mor_\calt(Q,U)$ such that $\4\varphi|_{P,U_0}=\varphi_0$, and 
hence $\4\varphi|_{P,U}=\varphi$. 
\qedhere
\end{enuma}
\end{proof}



\newpage

\end{document}